\documentclass[11pt]{article}
\usepackage{latexsym,amsmath,color,amsthm,amssymb,epsfig,graphicx,mathrsfs}
\usepackage{amssymb}
\usepackage[left=1in,top=1in,right=1in,bottom=1in]{geometry}
\usepackage{setspace}
\usepackage{amssymb, amsmath, amsthm, graphicx,mathrsfs}

\usepackage{comment}
\def\qed{\hfill\ifhmode\unskip\nobreak\fi\quad\ifmmode\Box\else\hfill$\Box$\fi}
\def\ite#1{\hfill\break${}$\hbox to 50pt {\quad(#1)\hfill}}
\def\Ber{{{\rm B}}}
\def\IB{{\rm B}_{\rm ind}}

\def\ex{{\rm{ex}}}

\def\cA{{\mathcal A}}
\def\cB{{\mathcal B}}

\def\cF{{\mathcal F}}

\def\cH{{\mathcal H}}

\def\cN{{\mathcal N}}
\def\GT{{\mathcal G}_{\rm tri}}  

\newtheorem{thm}{Theorem}[section]
\newtheorem{cor}[thm]{Corollary}

\newtheorem{definition}[thm]{Definition}

\newtheorem{lem}[thm]{Lemma}

\newtheorem{claim}[thm]{Claim}

\parindent=0pt
\parskip=6pt

\begin{document}

\pagestyle{myheadings}
\markright{{\small{\sc Z.~F\"uredi and Ruth Luo:   Induced Tur\'an problems and traces of hypergraphs  
}}}

\title{\vspace{-0.5in}  Induced Tur\'an problems and traces of hypergraphs}

\author{
{{Zolt\'an F\" uredi}}\thanks{
\footnotesize {Alfr\' ed R\' enyi Institute of Mathematics, Hungary.
E-mail:  \texttt{z-furedi@illinois.edu}. 
Research supported in part by the Hungarian National Research, Development and Innovation Office NKFIH grant KH-130371.
}}
\and{{Ruth Luo}}\thanks{University of California, San Diego, La Jolla, CA 92093, USA. E-mail: {\tt ruluo@ucsd.edu}. Research of this author
is supported in part by NSF grant DMS-1902808.
}}
\date{\today}

\maketitle

\vspace{-0.3in}

\begin{abstract}
Let $F$ be a graph. We say that a hypergraph $\cH$ contains an \emph{induced Berge} $F$ if
the vertices of $F$ can be embedded to $\cH$ (e.g., $V(F)\subseteq V(\cH)$) and
 there exists an injective mapping $f$ from the edges of $F$ to the hyperedges of $\cH$ such that
 $f(xy) \cap V(F) = \{x,y\}$ holds for each edge $xy$ of $F$.
In other words, $\cH$ contains $F$ as a trace.

 Let $\ex_{r}(n,\IB F)$ denote the maximum number of edges in an $r$-uniform hypergraph with no induced Berge $F$. Let $\ex(n,K_r, F)$ denote the maximum number of $K_r$'s in an $F$-free graph on $n$ vertices.
We show that these two Tur\'an type functions are strongly related.

\medskip\noindent
{\bf{Mathematics Subject Classification:}} 05D05, 05C65, 
   05C35.\\
{\bf{Keywords:}} extremal hypergraph theory, Berge hypergraphs, traces.  
\end{abstract}

\section{ \bf  Definitions, Berge $F$ subhypergraphs}

A hypergraph $\cH$ is $r$-\emph{uniform}  or simply an {\em $r$-graph} if it is a family of $r$-element subsets of a finite set $V(\cH)$.
If the \emph{vertex set} $V(\cH)$ is clear from the text, then we associate an $r$-graph $\cH$ with its edge set $E(\cH)$.
Usually we take $V(\cH)=[n]$, where $[n]$ is the set of first $n$ integers, $[n]:=\{ 1, 2, 3,\dots, n\}$.
We also use the notation $\cH\subseteq \binom{[n]}{r}$. For a set of vertices $S \subseteq V(\cH)$ define the \emph{codegree} of $S$, denoted as $\deg(S)$, to be the number of edges of $\cH$ containing $S$.
The $s$-\emph{shadow}, $\partial_s\cH$, is the family of $s$-sets contained in the edges of $\cH$. So $\partial_1\cH$ is the set of non-isolated vertices, and $\partial_2\cH$ is the graph whose edges are the pairs with positive co-degree in $\cH$.

\begin{definition} 
For a graph $F$ with vertex set $\{v_1, \ldots, v_p\}$ and edge set $\{e_1, \ldots, e_q\}$, a hypergraph $\mathcal H$ contains a {\bf Berge $F$} if there exist distinct vertices $\{w_1, \ldots, w_p\} \subseteq V(\mathcal H)$ and  distinct edges $\{f_1, \ldots, f_q\} \subseteq E(\mathcal H)$, such that if $e_i = v_{\alpha} v_{\beta}$, then $\{w_{\alpha}, w_{\beta}\} \subseteq f_i$. The vertices $\{w_1, \ldots, w_p\}$ are called the {\bf base vertices} of the Berge $F$.
\end{definition}

\begin{definition} For a graph $F$ with vertex set $\{v_1, \ldots, v_p\}$ and edge set $\{e_1, \ldots, e_q\}$, a hypergraph $\mathcal H$ contains an {\bf induced Berge $F$} if there exists a set of distinct vertices $W:=\{w_1, \ldots, w_p\} \subseteq V(\mathcal H)$ and distinct edges $\{f_1, \ldots, f_q\} \subseteq E(\mathcal H)$, such that if $e_i = v_{\alpha} v_{\beta}$, then  $\{w_{\alpha}, w_{\beta}\} = f_i\cap W$.
\end{definition}
In particular, in the case that $\mathcal H$ is a graph (2-uniform), an induced Berge $F$ is just any copy of $F$ in $\mathcal H$, not to be confused with the notion of induced subgraphs.
If the two hypergraphs have the same number of edges, $e(\cH)=e(\cF)$,  then we say that $\cH$ itself is a(n induced) Berge $F$ hypergraph. 
The set of $r$-uniform (induced) Berge $F$ hypergraphs is  denoted by $\{\Ber (F)\}_{r}$ ($\{\IB (F)\}_{r}$, resp.).
For example, if $F$ is a triangle, $E(F)= \{ 12, 13, 23\}$, then $\{B(F)\}_3$ contains four triple systems:
$\{12a, 13a, 23a \}$, $\{12a, 13a, 23b \}$, $\{12a, 13b, 23c \}$, and $\{123, 13a, 23b \}$. The first three of them contains an induced $C_3$, the fourth does not.
Parenthesis and indices are omitted when it does not cause ambiguities.

\subsection{Three types of extremal numbers}

Given a set of $r$-graphs $\cF$ the hypergraph $\cH$ is called $\cF$-\emph{free} if it does not have any subgraph isomorphic to any member of $\cF$.
The \emph{Tur\'an number} of $\cF$, denoted by $\ex_{r}(n, \cF)$, is the maximum size of an $\cF$-free $\cH\subseteq \binom{[n]}{r}$.
Usually it is assumed that $|\cF|$ is finite, so the  well-known fact $\ex_{2}(n, \{ C_3, C_4, C_5, \dots\})= n-1$ usually is not considered a Tur\'an type result because the set of forbidden graphs $\cF$, the set of all cycles, is infinite.
If $r=2$  then the index is usually omitted.
Also if $\cF$ has only one member, $\cF=\{ F\}$, then we write $\ex_{r}(n,F)$ instead of $\ex_{r}(n, \{ F\})$.

The {\em generalized Tur\'an number} for graphs, pioneered by 
 Erd\H os~\cite{Erdos} and recently systematically investigated by Alon and Shikhelman~\cite{alon-shik}, is the following extremal problem.
We only formulate the case relevant to this paper.
Given a graph $F$, let $\ex(n,K_r,F)$ denote the maximum possible number of copies of $K_r$'s in an $F$-free, $n$-vertex graph, i.e.,
\[
\ex(n,K_r, F) := \max \left\{  |\cN_r(H)|: H \text{ is }F\text{-free }, H \subseteq {[n] \choose 2} \right\},
\]
where $\cN_r(H)\subseteq \binom{[n]}{r}$ is the family of $r$-element vertex sets that span a $K_r$ in $H$.
In particular $\cN_2(H)= E(H)$ and $\ex(n, K_2, F) = \ex(n, F)$ is the regular Tur\'an number of $F$.

For a graph $F$ and positive integer $r$, let
\[\ex_{r}(n, \Ber F) := \max \{e(\mathcal H): \mathcal H \subseteq {[n] \choose r} \text{ and } \mathcal H \text{ is Berge } F\text{-free}\}.
 \] 
Ever since Gy\H ori, G.~Y.~Katona, and Lemons~\cite{GKL} investigated hypergraphs without long Berge paths there is a  renewed interest concerning extremal Berge type problems.
Here we define a related function, the \emph{induced Berge Tur\'an number} of $F$.
Special cases were studied earlier, especially the 3-uniform case (e.g., Maherani and Shahsiah~\cite{MS3}, Gy\'arf\'as~\cite{Gy}, Sali and Spiro~\cite{SS}).
\[ \ex_{r}(n,\IB F) := \max \{e(\mathcal H): \mathcal H \subseteq {[n] \choose r} \text{ and } \mathcal H \text{ is induced Berge } F\text{-free}\}.\]

We consider the relationship between these three functions.
Obviously,
\begin{equation}\label{eqindF}
 \ex(n,K_r, F) \leq \ex_{r}(n,\Ber F) \leq \ex_{r}(n, \IB F). 
\end{equation}
Indeed, consider a graph $G$ with $|\cN_r(G)|=\ex(n,K_r, F)$.
Since $G$ is $F$-free,  the $r$-graph $\cN_r(G)$ is Berge $F$-free, implying $|\cN_r(G)|\leq \ex_{r}(n, \Ber F)$.
The second inequality holds because if a hypergraph contains no Berge $F$ then it also contains no induced Berge $F$.

The induced Berge $F$ problem is motivated by the forbidden configuration problem for matrices (see Anstee~\cite{ans} for a survey).
It can also be reformulated as a hypergraph trace problem (see, e.g., Mubayi and Zhao~\cite{MZ}).
Few results are known for the induced Berge Tur\'an problem. In~\cite{MZ}, the value of $\ex_r(n, \IB K_t)$ is determined asymptotically for $K_3$ and $K_4$, as well as $K_t$ when $t$ is close to the uniformity $r$.

A special case of induced Berge hypergraphs, so called \emph{expansions} were intensively studied,
see, e.g., Pikhurko~\cite{P},  Kostochka, Mubayi, and Verstra\"ete~\cite{KMV}, and the survey by Mubayi and Verstra\"ete~\cite{MV}.

There are also other areas of research in extremal graph theory which are called `induced' Tur\'an type results. E.g.,
Pr\"omel and Steger~\cite{PS} investigated the extremal properties of graphs not containing an induced copy of a given graph $F$. A more recent version is by Loh, Tait, Timmons, and Zhou~\cite{LTTZ}. But most of these are only distant relatives of our induced Berge question.

\section{Main results, bounds for $\ex_r(n, \IB F)$}

\subsection{The order of magnitude}

Let $F$ be a graph, $r\geq 2$.
Our aim is to determine the order of magnitude of the induced Berge Tur\'an number of $F$ as $n\to \infty$, or to reduce it to known problems.
Then in the next subsection we define a large class of 3-chromatic graphs $\GT$ which contains, e.g., all outerplanar graphs, and
 apply our results and methods to determine their induced Berge Tur\'an number more precisely.

\begin{thm}\label{mainbigr}
Let $r\geq 2$, and fix a graph $F$ such that $E(F)\neq \emptyset$. 
Then, as $n\to \infty$
   \[\ex_{r}(n, \IB F) = \Theta(\max_{2 \leq s \leq r}\{\ex(n,K_{s}, F) \}).\]
\end{thm}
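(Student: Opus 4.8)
The plan is to first record a reformulation that controls both bounds. Fix $p=|V(F)|$ and a $p$-set $W=\{w_1,\dots,w_p\}\subseteq V(\cH)$, and call a pair $\{w_\alpha,w_\beta\}\subseteq W$ \emph{realized} if some $f\in\cH$ satisfies $f\cap W=\{w_\alpha,w_\beta\}$. Since distinct edges of $F$ give distinct pairs in $W$ (the $w_i$ are distinct and $F$ is simple) and each hyperedge has a single trace $f\cap W$, the candidate representatives for distinct edges of $F$ form pairwise disjoint families; hence a system of distinct representatives exists as soon as each family is nonempty. Therefore $\cH$ contains an induced Berge $F$ with base $W$ if and only if the graph $G_W$ on $W$ of realized pairs contains a copy of $F$, and consequently $\cH$ is induced Berge $F$-free if and only if $G_W$ is $F$-free for every $p$-set $W$. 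This turns the induced Berge condition into a family of ordinary $F$-freeness conditions.

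For the lower bound, fix $s$ with $2\le s\le r$, reserve a set $B$ of $r-s$ vertices, and let $G$ be an $F$-free graph on the remaining vertices maximizing $|\cN_s(G)|$. Put $\cH_s=\{C\cup B:\ C\in\cN_s(G)\}$; its edges are distinct, so $e(\cH_s)=|\cN_s(G)|=\ex(n-(r-s),K_s,F)=\Omega(\ex(n,K_s,F))$. If $\cH_s$ had an induced Berge $F$ with base $W$, then, since $B$ lies in every edge, any $w_j\in B$ would lie in every representing hyperedge, so by the realized-pair description $v_j$ would be incident to every edge of $F$; when $F$ has no vertex incident to all its edges (i.e.\ $F$ is not a star plus isolated vertices), this forces $W\subseteq V(G)$, whence every realized pair lies inside a clique $C$ and $W$ spans a copy of $F$ in $G$, a contradiction. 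Taking the maximum over $s$ gives the lower bound for all such $F$. The remaining case, $F$ a star plus isolated vertices, has $\max_{s}\ex(n,K_s,F)=\Theta(n)$ and is handled by a hyperedge-matching, which is induced Berge $F$-free because the star's center has degree at least two and cannot lie in two disjoint edges.

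For the upper bound, let $\cH$ be induced Berge $F$-free. Each hyperedge spans an $r$-clique of the shadow $\partial_2\cH$, so $e(\cH)\le|\cN_r(\partial_2\cH)|$, and if $\partial_2\cH$ were $F$-free we would be done with $s=r$. In general the reformulation shows that a copy of $F$ in $\partial_2\cH$ on a $p$-set $W$ fails to lift to an induced Berge $F$ only if some of its pairs are \emph{blocked}---every hyperedge through the pair also meets $W$ elsewhere---which can happen only when codegrees are concentrated on few additional vertices. The plan is then to apply the $\Delta$-system (sunflower) method to extract a subfamily $\cH'$ with $e(\cH')\ge c_r\, e(\cH)$ whose edges share a common intersection pattern, isolating for each edge a distinguished set of $s$ ``structural'' coordinates ($2\le s\le r$) together with pairwise disjoint private coordinates. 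The disjointness of the private parts is exactly what defeats blocking: if the auxiliary graph $G^{\ast}$ on the structural coordinates contained a copy of $F$, one could pick for each of its edges a representing hyperedge whose private part avoids the finitely many other base vertices, assembling an induced Berge $F$, a contradiction. Hence $G^{\ast}$ is $F$-free, its relevant $s$-cliques number at most $\ex(n,K_s,F)$, and $e(\cH)=O(\max_{2\le s\le r}\ex(n,K_s,F))$; cores of size $s\le 1$ (matchings and near-stars) contribute only $O(n)$ edges, absorbed into the $s=2$ term.

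The step I expect to be hardest is the quantitative bookkeeping in the $\Delta$-system reduction, namely choosing the codegree thresholds so that both regimes realized by the lower-bound constructions are charged correctly: many cliques sharing a fixed block of filler vertices (multiplicity one, counted by $\ex(n,K_s,F)$), versus sunflower-like clusters with disjoint petals (few cores but many petals, contributing only lower-order terms). Balancing these two regimes, rather than the realization argument itself, is what forces the maximum over $s$ and constitutes the technical core of the theorem.
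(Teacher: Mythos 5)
Your opening reformulation (an induced Berge $F$ with base $W$ exists iff the graph $G_W$ of pairs realized as traces $f\cap W$ contains $F$, because the candidate hyperedges for distinct pairs form pairwise disjoint families) is correct, and your lower bound is sound and essentially the paper's: reserving $r-s$ common vertices and attaching them to every $s$-clique of an extremal $F$-free graph is exactly the construction behind Lemma~\ref{lb} iterated together with~\eqref{eqindF}, including the same case split for stars.

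The upper bound, however, has a genuine gap. The entire argument rests on an unproved extraction lemma: that one can pass to a subfamily $\cH'$ with $e(\cH')\geq c_r\,e(\cH)$ in which every edge splits into $s$ ``structural'' coordinates and pairwise disjoint ``private'' coordinates, for a single $s$. No precise statement of this lemma is given, and as stated it cannot hold --- if a positive fraction of the edges had nonempty pairwise disjoint private parts one would only get $e(\cH')=O(n)$, while the regime that actually dominates (many hyperedges through the same structural $s$-set, i.e.\ sunflowers with many petals) is precisely the one your charging scheme does not bound: those hyperedges do not correspond to distinct $s$-cliques of $G^{\ast}$, and your claim that they ``contribute only lower-order terms'' is asserted, not proved. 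You flag this bookkeeping yourself as the hardest step; it is not bookkeeping but the technical core of the theorem, and it is missing.

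For comparison, the paper avoids a global $\Delta$-system argument entirely. After passing to an $r$-partite subhypergraph (losing only a factor $r!/r^r$), Theorem~\ref{core} splits $\cH$ into an $\alpha$-core $\cA$, in which every $(r-1)$-set has codegree $0$ or at least $\alpha=|V(F)|-1$, and a remainder controlled by a system $\cB$ of representative edges whose $(r-1)$-element traces $\cB[\overline{s}]$ are again induced Berge $F$-free; this sets up an induction on the uniformity $r$ and replaces your ``petal'' regime. Lemma~\ref{coreF} then shows $\partial_2\cA$ is $F$-free (the codegree condition lets one greedily shrink $e_{xy}\cap V(F)$ down to $\{x,y\}$), so $|\cA|\leq \ex(n,K_r,F)$, yielding the explicit bound of Theorem~\ref{rpartite}. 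If you want to salvage your approach, you would need to state and prove a decomposition of this strength; the codegree-threshold idea you gesture at is exactly what the $\alpha$-core makes precise.
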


This theorem shows that the order of magnitudes of the three functions in~\eqref{eqindF}
  behave differently as $r$ changes.
For small $r$, in the range $r \leq \chi(F) - 1$, all the three, $\ex_{r}(n, F)$, $\ex_{r}(n,\Ber F)$, and  $\ex_{r}(n,\IB F)$, are of order $\Theta(n^r)$ because
the balanced complete $(\chi(F) - 1)$-partite $r$-graph contains no Berge $F$ (so its 2-shadow, the $r$-partite Tur\'an graph is $r$-chromatic).

   If $r \geq |V(F)|$ then $\ex(n, K_r, F) = 0$ (since a $K_r$ contains a copy of $F$).
For general graphs $F$, the behavior of the three functions in the range $\chi(F) \leq r \leq |V(F)|-1$ is still unknown.
Determining the order of $\ex(n,K_r, F)$ for $r$ in this range would give an answer for the growth of $\ex_{r}(n,\IB F)$.

Concerning the Berge Tur\'an function Gerbner and Palmer~\cite{GP} showed that
    \[\ex_{r}(n,\Ber F) \leq \ex(n,F)\]
for $r\geq |V(F)|$.
So in this range  $\ex_{r}(n,\Ber F) =  O(n^2)$.
For the complete graphs the two sides have the same order: $\ex_{r}(n, \Ber K_r) = \Theta(n^2)$ if $r\geq 3$.
However this does not hold if $r$ is large compared to $|V(F)|$. 
Gr\'osz, Methuku, and Tompkins~\cite{GMT} proved that for any non-bipartite $F$ and sufficiently large $r$, the order of $\ex_{r}(n,F)$ differs from that of $\ex(n,F)$: there exists some number $th(F)$ such that if $r \geq th(F)$ then $\ex_{r}(n,F)= o(n^2)$.

In contrast, the order of the induced Berge Tur\'an function $\ex_{r}(n,\IB F)$ is non-decreasing in $r$. Moreover, it is basically monotone. If $\bigcap E(F) =\emptyset$, i.e., $F$ is not a star, then we will see later by Lemma~\ref{lb} that
\begin{equation}\label{eq21}
  \left(1-\frac{r-1}{n}\right)\ex_{r-1}(n, \IB F) \leq \ex_{r}(n, \IB F).
\end{equation}

\subsection{Outerplanar graphs and more}

We define the class of $t$-vertex graphs $\GT^{(t)}$ by induction on $t$ as follows.
The class $\GT^{(2)}$ has only a single member, $K_2$.
For  $t>2$ one obtains each member $G$ of $\GT^{(t)}$ by taking a $G^{(t-1)}\in \GT^{(t-1)}$, taking an edge $xy\in G^{(t-1)}$, adding a new vertex $z\notin V(G^{(t-1)})$, and joining $z$ to $x$ and to $y$.
Each $G\in \GT^{(t)}$ has exactly $t$ vertices and $2t-3$ edges.
Finally, let $\GT$ be the family of all non-empty subgraphs of the members of $\cup_{t\geq 2} \GT^{(t)}$.

Note that $\GT$ contains all outerplanar graphs, particuarly cycles, $C_t$, and forests.  Each $G\in \GT$ has chromatic number at most 3 and are obviously planar.

\begin{thm}\label{maincycle}
Let $r\geq 2$ be a positive integer.
Fix a graph $F\in \GT$. 
As $n\to \infty$ we have  $\ex_{r}(n, \IB F) = \Theta(\ex(n, F))$.
\end{thm}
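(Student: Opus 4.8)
The plan is to combine Theorem~\ref{mainbigr} with a single extremal estimate on clique counts. By Theorem~\ref{mainbigr} it suffices to show
\[
 \max_{2\le s\le r}\ex(n,K_s,F)=\Theta(\ex(n,F)).
\]
The lower bound needs nothing new: the term $s=2$ equals $\ex(n,K_2,F)=\ex(n,F)$. Moreover $\ex(n,K_s,F)=0$ whenever $s\ge|V(F)|$, since a $K_s$ already contains $F$; thus only finitely many $s$ matter, and it remains to prove, for each fixed $s$ with $3\le s<|V(F)|$, the upper bound $\ex(n,K_s,F)=O(\ex(n,F))$. This is the heart of the matter.

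For the upper bound I would fix, once and for all, a $2$-tree $T\in\GT^{(t)}$ with $F\subseteq T$ (such a $T$ exists by the definition of $\GT$), and prove an embedding lemma: there is a constant $D=D(s,t)$ so that any graph in which every edge lies in at least $D$ copies of $K_s$ must contain $T$, and hence $F$. Indeed, taking $D>\binom{t-1}{s-2}$, every edge $xy$ then has at least $t$ common neighbours, because the $s-2$ extra vertices of each such $K_s$ lie in $N(x)\cap N(y)$ and $D$ distinct $(s-2)$-subsets cannot fit into fewer than $t$ vertices. I would then build $T$ along its construction order $z_1,\dots,z_t$: map the initial edge $z_1z_2$ to any edge, and whenever a new vertex $z_i$ is attached to an already placed edge $z_az_b$, choose for $z_i$ a common neighbour of the images of $z_a,z_b$ that avoids the at most $t$ previously used vertices; such a vertex exists since that edge has at least $t$ common neighbours. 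This places all $2t-3$ edges of $T$, so $T$, and therefore $F$, embeds.

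With the lemma in hand I would run a cleaning process on an arbitrary $F$-free graph $G\subseteq\binom{[n]}{2}$: repeatedly delete any edge lying in fewer than $D$ copies of $K_s$ in the current graph. Each deletion destroys fewer than $D$ copies of $K_s$, and each copy is destroyed exactly when its first edge is removed, so the total number of copies destroyed is less than $D\cdot e(G)$. If the process ever halted at a nonempty graph $G'$, then every edge of $G'$ would lie in at least $D$ copies of $K_s$, and the embedding lemma would produce $F\subseteq G'\subseteq G$, contradicting $F$-freeness. Hence the process deletes all edges and destroys every copy of $K_s$, giving
\[
 |\cN_s(G)|< D\cdot e(G)\le D\cdot\ex(n,F),
\]
the last inequality because $G$ is itself $F$-free. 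Taking the maximum over the finitely many relevant $s$ yields $\max_{2\le s\le r}\ex(n,K_s,F)=O(\ex(n,F))$, and Theorem~\ref{mainbigr} finishes the proof.

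The step I expect to be the crux is the embedding lemma, that is, checking that a uniform lower bound on the number of $K_s$'s through each edge forces a full copy of the $2$-tree $T$. This is exactly where the defining ``glue a triangle onto an existing edge'' structure of $\GT$ enters, through the fact that each step of the greedy embedding only requires one fresh common neighbour of an already embedded edge; everything else is bookkeeping. I would note that the resulting bound $|\cN_s(G)|<D\,e(G)$ is far from tight (for $F=C_5$, $s=3$ the truth is of order $n^{3/2}$ while we only claim $O(n^2)$), but this crude estimate is all that is needed for the order of magnitude $\Theta(\ex(n,F))$.
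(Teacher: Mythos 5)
Your proof is correct, but it runs in the opposite direction from the paper's. The paper does \emph{not} deduce Theorem~\ref{maincycle} from Theorem~\ref{mainbigr}; it redoes the induction on $r$ directly on the hypergraph, applying the core decomposition of Theorem~\ref{core} with $\alpha=t-1$ and showing that for $F\in\GT$ the core $\cA$ must be \emph{empty}: a single core edge puts a $K_r$ in $\partial_2\cA$, and Claim~\ref{cl:34} together with Lemma~\ref{le:35} grows a triangle into the ambient $2$-tree, hence into $F$. This yields the clean recursion $|\cH|\le (t-2)\,r\,\ex_{r-1}(n,\IB F)$ with no generalized Tur\'an numbers appearing, and the bound $\ex(n,K_r,C_t)=O(\ex(n,C_t))$ then falls out as a \emph{corollary} of Theorems~\ref{mainbigr} and~\ref{maincycle}. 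You instead take Theorem~\ref{mainbigr} as a black box and prove that corollary (in the general form $\ex(n,K_s,F)=O(\ex(n,F))$ for $F\in\GT$) directly, via the edge-deletion process and the greedy embedding of a $2$-tree into a graph whose every edge has at least $t$ common neighbours; your embedding lemma is precisely the graph-level analogue of Claim~\ref{cl:34}, exploiting the same ``attach a new vertex to an existing edge'' structure of $\GT$, and your counting ($\binom{t-1}{s-2}<D$ forces $t$ common neighbours; each $K_s$ is destroyed when its first edge is deleted) is sound. There is no circularity, since you prove the clique-counting bound independently rather than importing the paper's corollary. What you gain is modularity and a statement of independent interest about generalized Tur\'an numbers; what the paper's route gains is a self-contained hypergraph argument with a somewhat better explicit constant, $\frac{1}{2}r^r(t-2)^{r-2}$, versus the product of the Theorem~\ref{mainbigr} constant with your $D$.
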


This theorem reveals further gaps between $\ex_{r}(n, \Ber F)$ and $\ex_{r}(n, \IB F)$.
Gy\H{o}ri and Lemons~\cite{GL3,GL} proved that for $r\geq 3$ an $r$-uniform hypergraph avoiding a Berge cycle $C_{2t+1}$
has at most $O(\ex(n, C_{2t}))$ edges, which is known to be $O(n^{1 + (1/t)})$. 
On the other hand, in the same range, we have $\ex_{r}(n, \IB C_{2t+1})=\Theta(n^2)$.

Together, Theorems~\ref{mainbigr} and~\ref{maincycle} show that $\ex(n,C_t)$ has the same order as $\max_{2 \leq s \leq r}\{\ex(n,K_{s}, F) \}$. We obtain the following (known) corollary.
For any $r\geq 2$ and $t \geq 3$
\[\ex(n, K_r, C_t) = O(\ex(n, C_t)).\]
We also state the case of trees.

\begin{cor}\label{maintree}
Let $r \geq 2$ and $T$ be a forest with at least two edges. Then $\ex_{r}(n,\IB T) = \Theta(\ex(n,T))= \Theta(n)$.
\end{cor}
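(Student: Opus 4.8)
The plan is to observe that every forest lies in the class $\GT$, so that Theorem~\ref{maincycle} applies directly and reduces the corollary to the classical estimate $\ex(n,T)=\Theta(n)$. Indeed, a forest is outerplanar, and the excerpt already records that $\GT$ contains all forests; hence $T\in\GT$, and Theorem~\ref{maincycle} gives $\ex_r(n,\IB T)=\Theta(\ex(n,T))$. (Alternatively one could invoke Theorem~\ref{mainbigr}: since a $T$-free graph is $(|V(T)|-2)$-degenerate it contains only $O(n)$ copies of each fixed $K_s$, so $\max_{2\le s\le r}\ex(n,K_s,T)$ is governed by the $s=2$ term $\ex(n,K_2,T)=\ex(n,T)$ up to constants.) It therefore remains only to show $\ex(n,T)=\Theta(n)$.

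For the upper bound $\ex(n,T)=O(n)$ I would use a degeneracy argument. Write $p=|V(T)|$. A greedy embedding along a rooted ordering of the forest (each non-root vertex having a unique earlier neighbour) shows that any graph of minimum degree at least $p-1$ contains $T$: when one embeds a vertex whose parent has been placed at $x$, at most $p-2$ of the $\ge p-1$ neighbours of $x$ are already occupied, so an unused neighbour is always available. Consequently every $T$-free graph, and every subgraph of it (which is again $T$-free), has a vertex of degree at most $p-2$; that is, it is $(p-2)$-degenerate and hence has at most $(p-2)n$ edges.

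For the matching lower bound $\ex(n,T)=\Omega(n)$ I would exhibit a $T$-free graph with linearly many edges, splitting into two cases according to whether $T$ is a star. If $T$ is not a star, then the single star $K_{1,n-1}$ is $T$-free, since every subgraph of a star is itself a star, and it has $n-1$ edges. If $T=K_{1,m}$ with $m\ge 2$, then being $T$-free is equivalent to having maximum degree at most $m-1$, so any $(m-1)$-regular graph on $n$ vertices is $T$-free and has $(m-1)n/2$ edges. In either case $\ex(n,T)=\Omega(n)$, and together with the upper bound this yields $\ex(n,T)=\Theta(n)$ and hence $\ex_r(n,\IB T)=\Theta(n)$.

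There is little genuine difficulty here: once Theorem~\ref{maincycle} is available the statement is essentially the classical fact that forests have linear Tur\'an number. The only point requiring care is the lower bound, where the star case must be separated from the non-star case (a disjoint union of large stars, for instance, fails to avoid $2K_2$), and of course the verification that forests indeed belong to $\GT$, which is where all of the hypergraph content is hidden.
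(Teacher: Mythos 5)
Your proposal is correct and takes essentially the same route as the paper, which states Corollary~\ref{maintree} as an immediate consequence of Theorem~\ref{maincycle} (forests belong to $\GT$) together with the classical fact that $\ex(n,T)=\Theta(n)$ for a forest with at least two edges. The degeneracy upper bound and the star/non-star case split you supply for $\ex(n,T)=\Theta(n)$ are standard details the paper leaves implicit.
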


Finally, we get better bounds for stars, $F = K_{1,t-1}$. 
\begin{thm}\label{star}For any $r\geq 2$, $t \geq 3$, if $n = a(r+t-3) + b$ with $b \leq r+t-4$ then \[a{r+t-3 \choose r} + {b \choose r} \leq \ex_{r}(n, \IB K_{1,t-1}) \leq \frac{n}{r}{r + t - 3 \choose r-1} .\] In particular, if $n$ is divisible by $r+t-3$, the lower bound is $\frac{n}{r}{r+t-4 \choose r-1}$.
\end{thm}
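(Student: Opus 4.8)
The plan is to prove the two bounds separately; the heart of the matter is the upper bound, which I would derive from a bound on the maximum degree.

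First I would record the local reformulation of the forbidden configuration. Fix a vertex $v$ and let $L_v=\{f\setminus\{v\}:v\in f\in\mathcal H\}$ be its link, a family of $\deg(v)$ distinct $(r-1)$-sets. Call a subfamily \emph{irredundant} if every member contains a \emph{private} vertex, i.e. a vertex lying in no other member of the subfamily. Unwinding the definition, an induced Berge $K_{1,t-1}$ with center $v$ is exactly a choice of $t-1$ sets $S_1,\dots,S_{t-1}\in L_v$ together with private vertices $w_i\in S_i\setminus\bigcup_{j\neq i}S_j$ (the $w_i$ are the leaves, the edges are $\{v\}\cup S_i$, and the $w_i$ are automatically distinct). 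Hence $\mathcal H$ is induced Berge $K_{1,t-1}$-free iff no link $L_v$ contains an irredundant subfamily of size $t-1$. Since $r\,e(\mathcal H)=\sum_v\deg(v)$, the upper bound follows once I show $\deg(v)=|L_v|\le\binom{r+t-3}{r-1}$ for every $v$.

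So the crux is the purely set-theoretic statement (with $s=r-1$, $k=t-1$): \emph{a family $\cF$ of distinct sets, each of size at most $s$, containing no irredundant subfamily of size $k$ has $|\cF|\le\binom{s+k-1}{s}$.} I would prove this via a skew Bollob\'as set-pair inequality. The key auxiliary fact is that such an $\cF$ has covering (transversal) number at most $k-1$: take an inclusion-minimal transversal $T$; minimality gives for each $x\in T$ a set $e_x\in\cF$ with $e_x\cap T=\{x\}$, these $e_x$ are distinct, and $\{e_x:x\in T\}$ is irredundant (each $x$ is private to $e_x$), so $|T|\le k-1$. The same bound applies to any family with no irredundant $k$-subfamily. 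To set up the pairs, order $\cF=\{S_1,\dots,S_N\}$ by non-increasing size, so that $S_i\not\subseteq S_j$ whenever $i<j$ and thus $S_i\setminus S_j\neq\emptyset$. Put $A_j:=S_j$ and let $B_j$ be a transversal of $\{S_i\setminus S_j:i<j\}$ chosen inside $\bigcup_{i<j}(S_i\setminus S_j)$. This truncated family again has no irredundant $k$-subfamily (such a subfamily would lift to one of $\cF$, since a private vertex of $S_i\setminus S_j$ avoids $S_j$ and hence stays private in $S_i$), so $|B_j|\le k-1$; moreover $B_j\cap S_j=\emptyset$ while $B_j\cap S_i\neq\emptyset$ for all $i<j$. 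With $|A_j|\le s$ and $|B_j|\le k-1$, the skew Bollob\'as inequality yields $N\le\binom{s+k-1}{s}=\binom{r+t-3}{r-1}$. I expect this step---manufacturing the $B_j$ with the correct two-sided intersection pattern through the transversal bound applied to the truncations $\{S_i\setminus S_j\}$---to be the main obstacle; ordering by size is exactly what keeps these truncations nonempty and lets the inequality close.

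For the lower bound I would use a block construction. Partition $[n]$ into $a$ blocks of size $r+t-3$ and one block of size $b$, and let $\mathcal H$ consist of all $r$-subsets lying inside a single block; this has exactly $a\binom{r+t-3}{r}+\binom{b}{r}$ edges. Every edge lies in one block, so any induced Berge $K_{1,t-1}$ would live in a single block, i.e. the link of its center would be an irredundant $(t-1)$-family among the $(r-1)$-subsets of the remaining at most $r+t-4$ vertices of that block. But $t-1$ sets of size $r-1$ with distinct private vertices force each set into a ground set of size $(r+t-4)-(t-2)=r-2<r-1$, which is impossible; hence $\mathcal H$ is induced Berge $K_{1,t-1}$-free. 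The ``in particular'' statement is then immediate from $\binom{r+t-3}{r}=\frac{r+t-3}{r}\binom{r+t-4}{r-1}$ applied with $b=0$.
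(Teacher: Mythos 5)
Your proposal is correct and follows essentially the same route as the paper: the identical disjoint-clique construction for the lower bound, and for the upper bound a per-vertex degree bound obtained by observing that no link may contain $t-1$ sets each with a private vertex (what the paper calls a strongly representable subfamily). The only difference is that where the paper invokes the F\"uredi--Tuza bound $|\cF|\le\binom{r+s-1}{r}$ as a black box, you reprove it via minimal transversals and the skew Bollob\'as set-pair inequality, which makes the argument self-contained but does not change its substance.
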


\section{Constructions and proofs}

\subsection{Simple constructions and a monotonicity of the induced Berge Tur\'an function}

If $E(F)$ has a single edge then for $n\geq |V(F)|+r-2$ we have
   $\ex(n, F)=\ex(n,K_r, F) =\ex_{r}(n,\Ber F) = \ex_{r}(n, \IB F)=0$, so there is nothing to prove, all of our statements trivially hold.

In all other cases we have $\ex_{r}(n, \IB F)=\Omega(n)$ as one can see from the following constructions.
If $F$ has two non-disjoint edges then a matching of $r$-sets gives
  $\ex_{r}(n, \IB F)\geq \lfloor n/r \rfloor$.
If $F$ has two disjoint edges then the hypergraph consisting of $n-r+1$ sets sharing a common $(r-1)$-set yields $\ex_{r}(n, \IB F)\geq n-r+1$.

If $x\in V(F)$ is an isolated vertex then
$\ex_{r}(n, \IB F) = \ex_{r}(n, \IB (F\setminus \{ x\}))$ for all $n> (r-2)|E(F)|+|V(F)|$.
So we may delete isolated vertices and asymptotically get the same Tur\'an number.
From now on, we suppose that $F$ has no isolated vertex and $|E(F)|\geq 2$.

\begin{lem}\label{lb}Fix integers $r,t \geq 2$. If $F$ is a graph on $t$ vertices such that $F \neq K_{1, t-1}$ (and $e(F)\geq 2$ and $F$ has no islated vertex), then $\ex_{r}(n,\IB F) \geq \ex_{(r-1)}(n-1,\IB F)$. In particular, $\ex_{r}(n, \IB F) = \Omega(\ex(n,F))$.
\end{lem}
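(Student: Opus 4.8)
The plan is to establish $\ex_r(n,\IB F)\ge \ex_{r-1}(n-1,\IB F)$ by an explicit lifting construction, and then to iterate this inequality down to the graph case $r=2$ in order to extract the $\Omega(\ex(n,F))$ bound. First I would take an $(r-1)$-uniform hypergraph $\cG\subseteq\binom{[n-1]}{r-1}$ that is induced Berge $F$-free and achieves $e(\cG)=\ex_{r-1}(n-1,\IB F)$. I introduce one new vertex, call it $n$, and lift every edge of $\cG$ through it:
\[
  \cH:=\{\, e\cup\{n\} : e\in \cG\,\}\subseteq \binom{[n]}{r}.
\]
Since $n\notin e$ for each $e\in\cG$, the map $e\mapsto e\cup\{n\}$ is injective and $e(\cH)=e(\cG)=\ex_{r-1}(n-1,\IB F)$, so it suffices to prove that $\cH$ contains no induced Berge $F$.

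Suppose, for contradiction, that $\cH$ does contain an induced Berge $F$, with base vertices $W=\{w_1,\dots,w_t\}$ and distinct edges $f_1,\dots,f_q$ (where $q=e(F)$) satisfying $\{w_\alpha,w_\beta\}=f_i\cap W$ whenever $e_i=v_\alpha v_\beta$. The crucial feature of $\cH$ is that \emph{every} one of its edges contains the vertex $n$, and I would split on whether $n\in W$. If $n\notin W$, then $W\subseteq[n-1]$, and writing $f_i=e_i'\cup\{n\}$ with $e_i'\in\cG$ we get $f_i\cap W=e_i'\cap W$; hence the distinct edges $e_1',\dots,e_q'$ together with $W$ exhibit an induced Berge $F$ inside $\cG$, contradicting the choice of $\cG$. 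If instead $n=w_j$ for some $j$, then for every edge $e_i=v_\alpha v_\beta$ of $F$ we have $n\in f_i\cap W=\{w_\alpha,w_\beta\}$, which forces $j\in\{\alpha,\beta\}$, i.e.\ $v_j$ is incident to $e_i$. Thus $v_j$ lies on every edge of $F$; since $F$ has no isolated vertex, this means $F$ is the star $K_{1,t-1}$ centred at $v_j$, contradicting the hypothesis $F\neq K_{1,t-1}$. In either case we reach a contradiction, so $\cH$ is induced Berge $F$-free and the first inequality follows.

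For the ``in particular'' assertion I would iterate. Applying the inequality $r-2$ times gives
\[
  \ex_r(n,\IB F)\ \ge\ \ex_2(n-r+2,\IB F)\ =\ \ex(n-r+2, F),
\]
the last equality because for $2$-uniform hypergraphs an induced Berge $F$ is just a copy of $F$. It then remains to compare $\ex(n-r+2,F)$ with $\ex(n,F)$ for fixed $r$, and here a standard vertex-deletion averaging suffices: removing a uniformly random vertex from an extremal $F$-free graph on $m$ vertices keeps each edge with probability $\frac{m-2}{m}$ and leaves the graph $F$-free, so $\ex(m-1,F)\ge \frac{m-2}{m}\ex(m,F)$. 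Iterating this bound yields
\[
  \ex(n-r+2,F)\ \ge\ \Big(\textstyle\prod_{m=n-r+3}^{n}\frac{m-2}{m}\Big)\,\ex(n,F),
\]
and for fixed $r$ the product (a ratio of polynomials of equal degree in $n$) tends to $1$ as $n\to\infty$, hence is bounded below by a positive constant. Combining the two displays gives $\ex_r(n,\IB F)=\Omega(\ex(n,F))$.

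The construction and both inequalities are routine; the one genuinely delicate point is the case $n\in W$ in the freeness argument. There one must recognise that a single universal vertex $n$ can be used to realise a base vertex only if that vertex is incident to all edges of $F$, so that the extra edges through $n$ can only embed a star. This is precisely where the assumption $F\neq K_{1,t-1}$ is indispensable, and it explains why stars must be excluded from the statement.
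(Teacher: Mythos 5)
Your proposal is correct and follows essentially the same route as the paper: lift every edge of an extremal $(r-1)$-uniform example through a new universal vertex, observe that an induced Berge $F$ in the lift would force the new vertex to be a base vertex lying in every hyperedge and hence force $F=K_{1,t-1}$, then iterate down to $r=2$. Your case analysis and the vertex-deletion averaging for $\ex(n-r+2,F)=\Theta(\ex(n,F))$ simply make explicit steps the paper leaves implicit.
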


\begin{proof}Let $\mathcal H$ be an $(r-1)$-uniform hypergraph on $n-1$ vertices with $\ex_{r}(n-1, \IB F)$ edges and no induced Berge $F$. Construct an $r$-uniform hypergraph $\mathcal H'$ with $V(\mathcal H') = V(\mathcal H) \cup \{v\}$ such that the edges of $\mathcal H'$ are obtained by extending every edge of $\mathcal H$ to include the new vertex $v$. Suppose $\mathcal H'$ contains an induced Berge $F$. Since $\mathcal H$ was induced Berge $F$-free, $v$ must be a base vertex. Because $v$ is contained in every edge of $\mathcal H'$, there is a fixed vertex contained in every edge of $F$. I.e., $F = K_{1, t-1}$, a contradiction.

Inductively, we obtain $\ex_{2}(n-r+2, \IB F) \leq \ex_r(n, \IB F)$. But $\ex_2 (n-r+2, \IB F) = \ex(n-r+2, F) = \Theta(\ex(n, F))$.
\end{proof}

To show~\eqref{eq21} let $\cH$ be an induced Berge $F$-free $(r-1)$-uniform hypergraph on $n$ vertices, $|\cH|=\ex_{(r-1)}(n,\IB F)$.
For $x\in V:=V(\cH)$ let $\cH_x:=\{ e\in \cH: e\subset V\setminus \{x\}\}$.
Since each $\cH_x$ is also induced Berge $F$-free we get
\[  (n-r+1)\ex_{(r-1)}(n,\IB F) = (n-r+1)|\cH|=\sum_{x\in V}|\cH_x|\leq n\times
   \ex_{(r-1)}(n-1,\IB F).
\]
By Lemma~\ref{lb} the right hand side is at most $n \times
   \ex_{r}(n,\IB F)$. Rearranging yields~\eqref{eq21}. \qed

\subsection{The $\alpha$-core of a hypergraph}
Let $\cH$ be an $r$-partite, $r$-uniform hypergraph with parts $V(\cH) = V_1 \cup \ldots \cup V_r$. For some $1\leq s \leq r$ and edge $e \in \cH$, define $e[\overline{s}]$ to be the trace 
of $e$ onto all parts other than $V_s$. That is, $e[\overline{s}] = e \setminus V_s$. Let $\cH[\overline{s}] = \{e[\overline{s}]: e \in E(\cH)\}$.

\begin{thm}\label{core}
For positive integers $\alpha, r$, any $r$-uniform $r$-partite hypergraph $\cH$ contains edge-disjoint subhypergraphs $\cA$ and $\cB$ such that
\begin{enumerate}
\item[{\rm (a)}] For any $S \subseteq V(\cH)$, with $|S| = r-1$, either $\deg_{\cA}(S) = 0$ or $\deg_{\cA}(S) \geq \alpha$.
\item[{\rm (b)}] $|\cB| \geq \frac{|\cH \setminus \cA|}{\alpha - 1}$ and $|\cB| \leq \sum_{s=1}^r |\cB[\overline{s}]|$.
\end{enumerate}
\end{thm}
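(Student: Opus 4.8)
The plan is to build $\cA$ by a greedy peeling process and to assemble $\cB$ from carefully chosen representatives of the deleted edges. First I would run the following deletion procedure on $\cH$: as long as the current hypergraph has an $(r-1)$-set $S$ whose codegree satisfies $1 \le \deg(S) \le \alpha - 1$, delete every edge currently containing $S$ and record them together as a single \emph{batch}. This process terminates since each batch removes at least one edge. Because we only ever delete edges, codegrees are monotone non-increasing, so once all edges through a given $S$ are removed its codegree is $0$ forever and $S$ is never selected again; hence distinct batches correspond to distinct $(r-1)$-sets $S$. Let $\cA$ be the surviving hypergraph. By construction no $(r-1)$-set has codegree in the range $[1,\alpha-1]$, which is exactly property (a).

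Next I would define $\cB$ by choosing one representative edge from each batch. Since each edge is deleted in at most one batch, the representatives are distinct and $\cB \subseteq \cH \setminus \cA$, so $\cA$ and $\cB$ are edge-disjoint. For the counting bound in (b), each batch deletes at least one and at most $\alpha-1$ edges (the upper bound because we only select $S$ with $\deg(S)\le \alpha-1$ at the moment of deletion). If there are $k$ batches then $|\cH\setminus\cA|\le k(\alpha-1)$ while $|\cB|=k$, and rearranging gives $|\cB|\ge |\cH\setminus\cA|/(\alpha-1)$.

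The key remaining step is the trace inequality $|\cB|\le \sum_{s=1}^r |\cB[\overline s]|$, where the $r$-partite structure and the choice of representatives pay off. An $(r-1)$-set $S$ that lies in some edge meets exactly $r-1$ of the parts; let $s(S)$ be the unique missing index, so every edge $e\supseteq S$ satisfies $e[\overline{s(S)}]=S$. For the representative $e$ of the batch of $S$, set $s_e:=s(S)$, so that $e[\overline{s_e}]=S$. Since distinct batches carry distinct sets $S$, the map $e\mapsto e[\overline{s_e}]$ is injective on $\cB$. Grouping the images by the value of $s_e$ and using $\{e[\overline{s_e}]:e\in\cB,\ s_e=s\}\subseteq \cB[\overline s]$ yields $|\cB|=|\{e[\overline{s_e}]:e\in\cB\}|\le \sum_{s=1}^r |\cB[\overline s]|$, as required.

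The main obstacle I anticipate is not the peeling itself but arranging the deleted edges so that \emph{both} parts of (b) hold at once: a naive choice of $\cB$ would either lose the factor $1/(\alpha-1)$ or fail the trace inequality. The resolution is to commit to exactly one representative per batch, which keeps $|\cB|$ as large as the number of batches (good for the first bound) while forcing each representative's trace, in the direction of its missing part, to be a distinct $(r-1)$-set (good for the second bound). The one point that must be checked with care is that codegrees never recover, so that the batch labels $S$ are genuinely distinct; this is precisely what guarantees injectivity of $e\mapsto e[\overline{s_e}]$.
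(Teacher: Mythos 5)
Your proposal is correct and follows essentially the same argument as the paper: the same greedy peeling of low-codegree $(r-1)$-sets to form $\cA$, one representative edge per deleted batch to form $\cB$, and the same observation that each batch's $(r-1)$-set is recovered as the trace of its representative in the direction of the unique part it misses, giving injectivity and hence $|\cB|\le\sum_{s=1}^r|\cB[\overline{s}]|$. Your write-up is, if anything, slightly more explicit than the paper's about why the selected sets $S$ are pairwise distinct and why each lies in exactly $r-1$ parts.
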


\begin{proof}
We build $\cA$ and $\cB$ inductively. Initially set $\cH_0 := \cH$, $\cB_0 := \{\emptyset\}$.

At step $i$, if there exists an $S \subseteq V(\cH_{i-1})$ with $|S| = r-1$ and $1\leq \deg_{\cH_{i-1}}(S) \leq \alpha -1 $, then let $E_S$ be the edges of $\cH_{i-1}$ containing $S$. Set $\cH_i = \cH_{i-1} \setminus E_S$. Pick any edge, say $B_i \in E_S$, and set $\cB_i = \cB_{i-1} \cup \{B_i\}$.

The process ends after $k$ steps when for every $S\subseteq V(\cH_k)$ with $|S|=r-1$, either $\deg_{\cH_k}(S) =0$ or $\deg_{\cH_k}(S) \geq \alpha$.  Let $\cA := \cH_k$ and $\cB := \cB_k = \{B_1 , \ldots , B_k\}$. Then $\cA$ satisfies $(a)$.

To see that $\cB$ satisfies $(b)$, at each step $i$ when we choose $B_i \in E_S$, $|E_S| \leq \alpha - 1$, so we obtain that $|\cB|$ is at least a $1/(\alpha - 1)$ portion of the deleted edges. Next, at each step, we associated with $B_i$ a distinct set $S_i$ of $r-1$ vertices. If $B_i$ and $B_j$ are associated with sets $S_i$ and $S_j$ respectively such that both sets are contained in $(V_1 \cup \ldots \cup V_r) \setminus V_s$, then in $\cB[\overline{s}]$, $B_i[\overline{s}]= S_i$ and $B_j[\overline{s}]=S_j$ are distinct. Hence $\sum_{s=1}^r \cB[\overline{s}] \geq |\{S_1, \ldots , S_k\}| = |\cB|$.
\end{proof}

Let any $\cA\subseteq \cH$ satisfying $(a)$ be called an {\bf $\alpha$-core} of $\cH$.

\begin{lem}\label{coreF}
Let $\alpha, r$ be positive integers, and let $F$ be a graph with $|V(F)|-1 
  \leq \alpha$. Let $\cH$ be an $r$-uniform, $r$-partite hypergraph with an $\alpha$-core $\cA$. If the 2-shadow $\partial_2\cA$ of $\cA$ contains a copy of $F$ then $\cA$
(and therefore $\cH$) contains an induced Berge $F$.
\end{lem}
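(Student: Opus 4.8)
The plan is to use the copy of $F$ in $\partial_2\cA$ to pin down the base vertices, and then to build the required hyperedges one edge of $F$ at a time, repairing each candidate edge so that it meets the base set in exactly the right pair. Write $V(F)=\{v_1,\dots,v_p\}$ and $E(F)=\{e_1,\dots,e_q\}$, and let $w_1,\dots,w_p$ be the images of $v_1,\dots,v_p$ under the embedding of $F$ into $\partial_2\cA$; set $W=\{w_1,\dots,w_p\}$. Since each edge $e_i=v_\alpha v_\beta$ maps to a pair $\{w_\alpha,w_\beta\}$ of positive codegree in $\cA$, some edge of $\cA$ contains both $w_\alpha$ and $w_\beta$; as $\cA$ is $r$-partite, $w_\alpha$ and $w_\beta$ lie in distinct parts.

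The core of the argument is the following claim: for each edge $e_i=v_\alpha v_\beta$ of $F$ there is an edge $f_i\in\cA$ with $f_i\cap W=\{w_\alpha,w_\beta\}$. To prove it, I would start from any edge $g\in\cA$ with $\{w_\alpha,w_\beta\}\subseteq g$ and then greedily remove the stray base vertices. If the current edge contains some $u\in W\setminus\{w_\alpha,w_\beta\}$, consider the $(r-1)$-set $S=g\setminus\{u\}$; it still contains $w_\alpha,w_\beta$, and since $\deg_\cA(S)\ge 1$ the $\alpha$-core property forces $\deg_\cA(S)\ge\alpha$. Every extension of $S$ to an edge of $\cA$ adds a vertex of the single part $V_s$ missing from $S$, namely the part of $u$; since $w_\alpha,w_\beta\notin V_s$, at most $p-2$ of the base vertices lie in $V_s$. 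Because $\alpha\ge|V(F)|-1=p-1>p-2$, at least one extension of $S$ is a non-base vertex $u'$, and replacing $u$ by $u'$ gives a new edge of $\cA$ still containing $w_\alpha,w_\beta$ but with one fewer stray base vertex. Iterating over the (at most $r-2$) stray positions — each in its own part, so the repairs do not interfere and $w_\alpha,w_\beta$ survive throughout since they always lie in $S$ — yields the desired $f_i$.

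Finally, I would observe that distinctness of the chosen edges comes for free: distinct edges $e_i\ne e_j$ of the simple graph $F$ determine distinct endpoint pairs, so $f_i\cap W=\{w_\alpha,w_\beta\}\ne f_j\cap W$, whence $f_i\ne f_j$. The vertices $w_1,\dots,w_p$ together with the edges $f_1,\dots,f_q$ then satisfy the definition of an induced Berge $F$ inside $\cA$, and a fortiori inside $\cH$.

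I expect the only delicate point to be the greedy repair step, specifically the bookkeeping that guarantees a non-base replacement exists at every stage. The bound $\alpha\ge|V(F)|-1$ is exactly what makes the count $p-2<\alpha$ work, so the hypothesis is used tightly; the remaining care is simply to verify that, because each stray vertex sits in its own part of the $r$-partition, fixing one position never re-pollutes another, and that the endpoints $w_\alpha,w_\beta$ are never disturbed.
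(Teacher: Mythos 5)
Your proof is correct and is essentially the paper's argument: both rest on the same swap step, using the $\alpha$-core property to replace a stray base vertex of a candidate edge by one of the $\ge\alpha$ co-extensions of the remaining $(r-1)$-set, with the count $\alpha\ge|V(F)|-1$ guaranteeing a non-base replacement. The paper phrases the descent as a minimality-plus-contradiction argument (choose $e_{xy}$ minimizing $|e_{xy}\cap V(F)|$) rather than your explicit greedy iteration, but the content is identical.
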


\begin{proof}
We will find an induced Berge $F$ on the same base vertex set $V(F)$.
Let $xy$ be an edge in the copy of $F$, and let $e_{xy}$ be an edge of $\cA$ containing $\{x,y\}$ with minimum $|e_{x
y} \cap V(F)|$. Such an edge $e_{xy}$ exists by the definition of the 2-shadow. If $e_{xy}$ contains some vertex $z \in V(F) \setminus \{x,y\}$, then the $(r-1)$-set $e_{xy} \setminus \{z\}$ is contained in at least $\alpha-1$ other edges in $\cA$. Since there are $|V(F)| - 3 \leq \alpha - 2$ vertices in $V(F) \setminus \{x,y,z\}$, we may find some $z' \not\in V(F) - \{x,y,z\}$ such that $e_{xy} \setminus \{z\} \cup \{z'\} \in E(\cA)$, contradicting the choice of $e_{xy}$. Therefore $e_{xy} \cap V(F) =\{x,y\}$. We find such an edge of $\cA$ for each edge of $F$.
\end{proof}
If $\alpha \geq e(F) +|V(F)|$, then with the same method one can find an induced Berge $F$ in $\cA$ such that each pair of hyperedges $e_{xy}$ and $e_{uv}$ intersect only at $\{x,y\} \cap \{u,v\}$. This is called an $F$-\emph{expansion}.
But this observation does not seem to help our purposes here.

\begin{claim}\label{cl:34}
Suppose that $r\geq 3$ and $\cA$ contains an induced Berge $F$, where $|V(F)|\leq \alpha$ (and $E(F)\neq \emptyset$).
Define a new graph $F^+:=F^+_{xy}$  by adding a new vertex $z\notin V(F)$, taking an edge $xy\in E(F)$, and joining $z$ to $x$ and to $y$.
Then $\cA$ also contains an induced Berge $F^+$.\end{claim}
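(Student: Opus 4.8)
The plan is to deduce the claim directly from Lemma~\ref{coreF}: I will exhibit a copy of $F^+$ inside the $2$-shadow $\partial_2\cA$ and then invoke that lemma. Write the given induced Berge $F$ with base vertex set $W$, which I identify with $V(F)$, and for each edge $e\in E(F)$ an edge $f_e\in\cA$ with $f_e\cap W=e$. Since every such pair $e$ is contained in the hyperedge $f_e$, it has positive codegree, so $E(F)\subseteq\partial_2\cA$; that is, $\partial_2\cA$ restricted to $W$ already contains a copy of $F$. (Recall that $\cA$, being the $\alpha$-core of an $r$-partite hypergraph, is itself $r$-uniform and $r$-partite.)

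First I would locate the vertex that is to play the role of the new vertex $z$. Consider the hyperedge $f_{xy}$, for which $f_{xy}\cap W=\{x,y\}$. Because $r\geq 3$ we have $|f_{xy}|\geq 3$, so $f_{xy}$ contains some vertex $u\notin\{x,y\}$; moreover, since the Berge copy is \emph{induced}, the equality $f_{xy}\cap W=\{x,y\}$ forces $u\notin W$. Both pairs $\{x,u\}$ and $\{y,u\}$ are contained in $f_{xy}$ and therefore lie in $\partial_2\cA$. Consequently the graph $\partial_2\cA$ restricted to the $|V(F)|+1$ distinct vertices $W\cup\{u\}$ contains all edges of $F$ together with $xu$ and $yu$; mapping the new vertex $z$ of $F^+$ to $u$, this is precisely a copy of $F^+$.

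It then remains to promote this shadow copy to an induced Berge $F^+$, which is exactly the conclusion of Lemma~\ref{coreF}. The only hypothesis to verify is the size condition $|V(F^+)|-1\leq\alpha$. Since $|V(F^+)|=|V(F)|+1$, this reads $|V(F)|\leq\alpha$, which is exactly the assumption of the claim. Thus Lemma~\ref{coreF}, applied to the graph $F^+$ with $\cA$ serving as its own $\alpha$-core, yields an induced Berge $F^+$ inside $\cA$, completing the proof.

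The point here is not a computational obstacle but the observation that the extra vertex $z$ need not be produced ``from scratch'': an internal vertex $u$ of the hyperedge $f_{xy}$ is automatically outside $W$ by the induced property and is simultaneously $2$-shadow-adjacent to both $x$ and $y$. Once this is seen, the claim is not a genuinely new argument but a reapplication of Lemma~\ref{coreF}, and the bound $|V(F)|\leq\alpha$ is exactly the slack needed so that the enlarged graph $F^+$ still satisfies the $\bigl(|V(\cdot)|-1\leq\alpha\bigr)$ hypothesis of that lemma. Note that $r\geq 3$ is used only to guarantee that $f_{xy}$ has a vertex beyond $x$ and $y$.
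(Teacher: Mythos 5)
Your proof is correct and takes essentially the same route as the paper's: both select a vertex $u$ of the hyperedge covering $xy$ lying outside $V(F)$ (which exists since $r\geq 3$ and is outside $V(F)$ by the induced property), observe that this places $F^+$ in $\partial_2\cA$, and then reapply Lemma~\ref{coreF} using $|V(F^+)|-1=|V(F)|\leq\alpha$. Your write-up is merely a bit more explicit about the distinctness of $u$ from the base vertices and about verifying the size hypothesis.
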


\begin{proof}
By Lemma~\ref{coreF}, there exists a hyperedge $e_{xy}\in \cA$ such that $e_{xy} \cap V(F) =\{x,y\}$.
Then for any $z'\in e_{xy} \setminus \{x,y\}$ we have that $xz'$ and $yz'\in \partial_2\cA$, so $F^+$ is a subgraph of $\partial_2\cA$.
Then Lemma~\ref{coreF} completes the Claim. \end{proof}

\begin{lem}\label{le:35}
Suppose that  $G\in \GT$ with $t=|V(T)|\geq 3$. Then $G\in \GT^{(t)}$.
\end{lem}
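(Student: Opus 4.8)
The statement as written asks for a membership conclusion, $G\in\GT^{(t)}$ (here $t=|V(G)|$, the $T$ in the hypothesis being a typo for $G$), so the goal is to exhibit $G$ as an end product of the recursive rule defining $\GT^{(t)}$: begin from $K_2\in\GT^{(2)}$ and repeatedly attach a new vertex to the two endpoints of an edge already present. Because every member of $\GT^{(t)}$ has exactly $2t-3$ edges, this conclusion can hold only when $G$ is edge-maximal inside $\GT$, i.e.\ $|E(G)|=2t-3$, and this is the regime I argue in (a graph of $\GT$ on $t$ vertices with fewer edges, such as $P_3$, is a subgraph of a member of $\GT^{(t)}$ but is not itself one). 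Rather than \emph{adding} edges to complete $G$, the plan is to run the construction \emph{backwards}: by induction on $t$, I will delete one well-chosen vertex, land on a member of $\GT^{(t-1)}$ by the inductive hypothesis, and then recognize $G$ as that smaller graph with a single vertex re-attached to an edge.

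For the base case $t=3$ the only edge-maximal member of $\GT$ on three vertices is the triangle, and $\GT^{(3)}=\{K_3\}$, so $G=K_3\in\GT^{(3)}$. For the inductive step I claim $G$ has a vertex $z$ of degree $2$ whose two neighbours $x,y$ are adjacent. Granting this, set $G':=G-z$. Then $G'$ is again edge-maximal in $\GT$ on $t-1$ vertices: it inherits $K_4$-minor-freeness from $G$ (equivalently, from $G$ being a subgraph of a $2$-tree), and $|E(G')|=(2t-3)-2=2(t-1)-3$. By the inductive hypothesis $G'\in\GT^{(t-1)}$. Since $xy$ is an edge of $G'$ and $G$ is obtained from $G'$ by adding the new vertex $z$ joined to both $x$ and $y$, the graph $G$ is exactly one produced by the defining rule at level $t$, so $G\in\GT^{(t)}$.

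The crux, and the step I expect to cost the most, is the structural claim that $G$ contains such a degree-$2$ \emph{simplicial} vertex and that its deletion stays inside the class. The engine is that $G$, being a subgraph of a $2$-tree, has no $K_4$-minor, and an edge-maximal $K_4$-minor-free graph on $t\ge 3$ vertices is precisely a $2$-tree; in particular $G$ is chordal with clique number $3$. Chordality supplies a simplicial vertex, and because the largest clique has size $3$ such a vertex has degree exactly $2$ with its two neighbours joined by an edge — this is the triple $z,x,y$ above. Deleting a simplicial vertex preserves chordality and $K_4$-minor-freeness and lowers the edge count by exactly $2$, so $G'$ remains an edge-maximal member of $\GT$ and the induction goes through. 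The only real care needed is in invoking the standard equivalences ``edge-maximal series-parallel $=$ $2$-tree $=$ chordal with $\omega=3$'' together with the existence of a simplicial vertex in any chordal graph; once these are in hand, the reversal of the construction is immediate.
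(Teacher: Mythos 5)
Your proposal proves a different statement from the one the lemma actually asserts and uses, and the gap is exactly the parenthetical claim you wave through without proof. The lemma's notation is admittedly abusive, but the paper's own proof fixes the intended meaning: ``$G\in \GT^{(t)}$'' is shorthand for ``$G$ is a (spanning) subgraph of some member of $\GT^{(t)}$'' --- the proof defines $s(G)$ as the smallest $s$ such that $G$ is a subgraph of some $H\in\GT^{(s)}$, shows $s(G)=t$, and concludes with ``a $t$-vertex graph $H''$ from $\GT^{(t)}$ containing $G$.'' This reading is forced by how the lemma is deployed in the proof of Theorem~\ref{maincycle}: there one starts from a triangle in $\partial_2\cA$, grows an induced Berge copy of a $t$-vertex member of $\GT^{(t)}$ via Claim~\ref{cl:34}, and needs the arbitrary forbidden graph $F\in\GT$ --- a cycle $C_{2t+1}$, a forest, etc., which have far fewer than $2t-3$ edges --- to sit inside such a member. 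You restrict to the edge-maximal regime $|E(G)|=2t-3$ and, for all sparser $G$, simply assert that such a graph ``is a subgraph of a member of $\GT^{(t)}$.'' That assertion \emph{is} the lemma, the statement the paper says ``seems to be evident, but still needs a proof''; as written, your argument says nothing about any of the cases for which the lemma is actually invoked.

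The irony is that your own toolkit closes the gap in two lines, yielding a correct proof genuinely different from the paper's. Every $G\in\GT$ is a subgraph of a $2$-tree and hence has no $K_4$ minor; extend $G$, adding edges on the same vertex set $V(G)$, to an edge-maximal $K_4$-minor-free graph $H$. By Dirac's characterization, edge-maximal $K_4$-minor-free graphs are exactly the graphs obtained from triangles by pasting along edges, i.e.\ the $2$-trees, i.e.\ the members of $\GT^{(t)}$ for $t\ge 3$ (your simplicial-vertex induction, applied to $H$, is a fine substitute for this citation, though redundant alongside it); then $G\subseteq H\in\GT^{(t)}$ as required. This is shorter but leans on minor theory, whereas the paper runs a self-contained induction on $t$: take the minimal $s$ with $G\subseteq H\in\GT^{(s)}$, observe the last-added vertex $v_s$ of $H$ must be used, delete it, and re-attach --- with the subtle move of passing to the \emph{induced} graph $H[V_I]\setminus\{v_s\}$ rather than $G-v_s$, so that the edge between the two $H$-neighbors of $v_s$ survives into the inductive hypothesis and the new vertex can legally be re-attached. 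One further small error in your write-up: the asserted equivalence ``$2$-tree $=$ chordal with $\omega=3$'' is false (two triangles sharing a vertex is chordal with $\omega=3$ but is not a $2$-tree); you only ever use the true direction, but it should not be cited as an equivalence.
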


\begin{proof}
This statement seems to be evident, but still needs a proof.
By definition, there exists an $s\geq t$ such that $G\in \GT^{(s)}$.
Let $s=s(G)$ be the smallest such $s$. We will show by induction on $t$ that $s(G)=t$.
The base case $t=3$ is obvious. Suppose $t>3$ and that $G$ is a subgraph of $H\in \GT^{(s)}$, where the vertices of $H$ are $\{ v_1, \dots, v_s\}$ and each $v_i$ (with $i\geq 3$) has exactly two $H$-neighbors in $\{ v_1,\dots, v_{i-1}\}$. Moreover, these two neighbors (call them $v_{\alpha(i)}$ and $v_{\beta(i)}$) are joined by an edge in $H$.
Let $I\subseteq [s]$, $I:= \{ i_1, \dots, i_t\}$, $1\leq i_1 < \dots < i_t\leq s$, $V_I:=\{ v_i: i\in I\}$, and suppose that
$G$ is a spanning subgraph of $H[V_I]$. Since $s$ is minimal, we have $i_t=s$ and
 $N_H(v_s)=\{v_{\alpha(s)}, v_{\beta(s)}\}$.
$G':= H[V_I]\setminus \{ v_s \}$ has $t-1$ vertices, and it belongs to $\GT$.
By our induction hypothesis there exists a $H'\in \GT^{t-1}$ such that $G'$ is a subgraph of $H'$ on the same vertex set $V_I\setminus \{ v_s\}$.
If $\{v_{\alpha(s)}, v_{\beta(s)}\}\subseteq V(H')$ then by adjoining a new vertex $z'$ to $H'$ and connecting it to $v_{\alpha(s)}$ and $v_{\beta(s)}$ we obtain a $t$-vertex graph $H''$ from $\GT^{(t)}$ containing $G$.
If $|N_H(v_s)\cap V(H')|\leq 1$ then it is even simpler to find such a graph $H''$.
 \end{proof}

\subsection{Proofs of the upper bounds for induced Berge $F$ problems}

We prove a version of Theorem~\ref{mainbigr} with more precise bounds. For positive integers $a$ and $b$, $(a)_b = (a)(a-1) \cdots (a-b+1)$ denotes the falling factorial.

\begin{thm}\label{rpartite}
Let $t, r, n$ be positive integers, and let $F$ be any graph with $|V(F)| = t$. Let $\mathcal H$ be an $n$-vertex $r$-uniform hypergraph with no induced Berge $F$. If $\cH$ is $r$-partite, then \[e(\cH) \leq \sum_{i=2}^r (t-2)^{r-i}(r)_{r-i}\ex(n,K_i, F).\]
\end{thm}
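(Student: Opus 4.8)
The plan is to induct on the uniformity $r$, using the $\alpha$-core machinery of Theorem~\ref{core} together with Lemma~\ref{coreF} to peel off one layer of the hypergraph at a time. Set $\alpha = t-1$, so that $|V(F)|-1 \le \alpha$ and Lemma~\ref{coreF} applies: any $\alpha$-core $\cA$ whose $2$-shadow contains $F$ would yield an induced Berge $F$ in $\cH$. Since $\cH$ is induced Berge $F$-free, so is every subhypergraph, and in particular $\partial_2\cA$ must be $F$-free. Apply Theorem~\ref{core} to obtain edge-disjoint $\cA$ and $\cB$ with $\cA$ an $(t-1)$-core and with $|\cB|$ controlling the discarded edges. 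I would then bound the three pieces separately and recombine.

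First I would bound $e(\cA)$ directly. Because $\partial_2\cA$ is an $F$-free graph on $n$ vertices and $\cA$ is an $r$-core, each edge of $\cA$ is an $r$-set all of whose pairs lie in $\partial_2\cA$; that is, each edge of $\cA$ spans a $K_r$ in $\partial_2\cA$, so $e(\cA) \le |\cN_r(\partial_2\cA)| \le \ex(n, K_r, F)$, which is precisely the $i=r$ term in the target sum (here $(t-2)^{0}(r)_0 = 1$). Next, by part (b) of Theorem~\ref{core}, the number of edges removed to form $\cA$ satisfies $|\cH\setminus\cA| \le (\alpha-1)|\cB| = (t-2)|\cB|$, and $\cB$ itself decomposes as $|\cB| \le \sum_{s=1}^r |\cB[\overline{s}]|$. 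The key structural point is that each $\cB[\overline{s}]$ is an $(r-1)$-uniform, $(r-1)$-partite hypergraph on the parts $V(\cH)\setminus V_s$, and—since it is a trace of a subhypergraph of $\cH$—it remains induced Berge $F$-free. This is exactly the setting to which the induction hypothesis applies.

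Combining, $e(\cH) = e(\cA) + |\cH\setminus\cA| \le \ex(n,K_r,F) + (t-2)\sum_{s=1}^r e(\cB[\overline{s}])$. There are $r$ choices of $s$, each contributing an $(r-1)$-uniform induced Berge $F$-free $r$-partite-minus-one hypergraph; feeding each into the inductive bound $e(\cB[\overline{s}]) \le \sum_{i=2}^{r-1}(t-2)^{(r-1)-i}(r-1)_{(r-1)-i}\ex(n,K_i,F)$ and multiplying by the factor $(t-2)$ and by the $r$ summands produces, after reindexing, the coefficient $(t-2)^{r-i}(r)_{r-i}$ on each $\ex(n,K_i,F)$ for $2 \le i \le r-1$. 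The base case $r=2$ reads $e(\cH) \le \ex(n,F) = \ex(n,K_2,F)$, which holds since a $2$-uniform induced Berge $F$-free hypergraph is just an $F$-free graph. The arithmetic to verify that $(t-2)\cdot r \cdot (t-2)^{(r-1)-i}(r-1)_{(r-1)-i} = (t-2)^{r-i}(r)_{r-i}$ is routine, using $(r)_{r-i} = r\,(r-1)_{r-i-1} = r\,(r-1)_{(r-1)-i}$.

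The main obstacle I anticipate is verifying that the trace $\cB[\overline{s}]$ is genuinely induced Berge $F$-free and $(r-1)$-partite so that the induction hypothesis legitimately applies: one must check that an induced Berge $F$ living in the $(r-1)$-uniform trace can be lifted back to an induced Berge $F$ in $\cH$, using the bijection between an edge $e\in\cB$ and its restriction $e[\overline{s}] = e\setminus V_s$, and confirm that the base-vertex and intersection conditions $\{w_\alpha,w_\beta\} = f_i \cap W$ are preserved under restriction (they are, since removing the part $V_s$ cannot create new intersections with a base set contained in the other parts). A secondary, purely bookkeeping, difficulty is ensuring the distinctness of the chosen witnesses across the $r$ values of $s$ does not cause double-counting; but since we only need an upper bound and the three pieces $e(\cA)$, the factor $(t-2)$, and the sum over $s$ are combined additively and multiplicatively rather than requiring disjointness of witnesses, this should not obstruct the estimate.
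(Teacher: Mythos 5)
Your proposal is correct and follows essentially the same route as the paper: induction on $r$, applying Theorem~\ref{core} with $\alpha=t-1$, bounding $|\cA|\le\ex(n,K_r,F)$ via Lemma~\ref{coreF}, and bounding $|\cH\setminus\cA|$ through the traces $\cB[\overline{s}]$ fed into the induction hypothesis. The only difference is that you spell out the lifting argument showing each $\cB[\overline{s}]$ is induced Berge $F$-free, which the paper asserts without detail.
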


\begin{proof}
We proceed by induction on $r$. The base case $r=2$ is trivial since an induced Berge $F$ is just a copy of $F$. Thus $\ex_2(n,\IB F) = \ex(n, K_2, F) = \ex(n,F)$. Now let $r \geq 3$. Let $\cA$ and $\cB$ be subhypergraphs of $\cH$ obtained from Theorem~\ref{core} with $\alpha = t - 1$. So we have \[|\cH| = |\cA| + |\cH \setminus \cA| \leq |\cA| + (t-2)\sum_{s=1}^r|B[\overline{s}]| \leq |\cA| + (t-2)(r)\ex_{r-1}(n,\IB F),\]
where the last inequality holds because each $B[\overline{s}]$ is $(r-1)$-uniform, $(r-1)$-partite and does not contain an induced Berge $F$.

By Lemma~\ref{coreF}, $\partial_2\cA$ contains no copy of $F$. Furthermore, since each edge in $\cA$ creates a $K_r$ in $\partial_2\cA$, $|\cA| \leq \ex(n, K_r, F)$. Applying the induction hypothesis, we obtain
\[|\cH| \leq \ex(n,K_r, F) +  (t-2)r\sum_{i=2}^{r-1} (t-2)^{r-1-i}(r-1)_{r-1-i}\ex(n,K_i, F)\]
and we are done.
\end{proof}

\begin{cor}
Let $t, r, n$ be positive integers, and let $F$ be any graph with $V(F) = t$. Then \[\max_{2 \leq s\leq r} \{\ex(n-(r-s), K_{s}, F)\} \leq \ex_{r}(n,\IB F) \leq \frac{r^r}{r!} \sum_{i=2}^r (t-2)^{r-i}(r)_{r-i}\ex(n,K_i, F).\]
In particular, $\ex_{r}(n,\IB  F) = \Theta(\max_{s \leq r}\{\ex(n, K_{s},F)\})$.
\end{cor}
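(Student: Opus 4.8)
The plan is to prove the two displayed inequalities separately and then read off the $\Theta$-statement. For the upper bound I would reduce to the $r$-partite case already handled by Theorem~\ref{rpartite}. Given an induced Berge $F$-free $\cH\subseteq\binom{[n]}{r}$, color the $n$ vertices with $r$ colors independently and uniformly at random and keep only the \emph{rainbow} edges, i.e.\ those meeting all $r$ color classes. A fixed edge survives with probability $r!/r^r$, so in expectation — hence for some coloring — the rainbow subhypergraph $\cH'$ satisfies $e(\cH')\ge (r!/r^r)\,e(\cH)$. Since $\cH'\subseteq\cH$ it is again induced Berge $F$-free, and it is $r$-partite with the color classes as parts, so Theorem~\ref{rpartite} gives $e(\cH')\le\sum_{i=2}^r(t-2)^{r-i}(r)_{r-i}\ex(n,K_i,F)$; rearranging yields the claimed upper bound.

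For the lower bound I would assemble the maximum term by term. For the extreme term $s=r$ the inequality $\ex(n,K_r,F)\le\ex_r(n,\IB F)$ is exactly \eqref{eqindF}. For $2\le s<r$, note that $\ex(m,K_s,F)\le\ex_s(m,\IB F)$ is \eqref{eqindF} read at uniformity $s$ with $m=n-(r-s)$, and then Lemma~\ref{lb}, iterated $r-s$ times, climbs from uniformity $s$ on $m$ vertices up to uniformity $r$ on $n$ vertices: $\ex_s(n-(r-s),\IB F)\le\ex_{s+1}(n-(r-s)+1,\IB F)\le\cdots\le\ex_r(n,\IB F)$. Taking the maximum over $s$ gives the lower bound.

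The step requiring care is that Lemma~\ref{lb} assumes $F\ne K_{1,t-1}$, so this climbing chain delivers the $s<r$ terms only when $F$ is not a star (the $s=r$ term survives for every $F$). This exclusion is genuine rather than cosmetic: extending every edge by a common vertex set creates an induced Berge star, so the naive lift fails precisely for stars. For $F=K_{1,t-1}$ one instead invokes Theorem~\ref{star}, which gives $\ex_r(n,\IB K_{1,t-1})=\Theta(n)$, together with the fact that a $K_{1,t-1}$-free graph has maximum degree at most $t-2$ and hence $\ex(n,K_s,K_{1,t-1})=O(n)$ for every $s$; this is enough for the $\Theta$-conclusion in the star case.

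Finally, to pass from the two displayed bounds to $\ex_r(n,\IB F)=\Theta(\max_{s\le r}\{\ex(n,K_s,F)\})$, I would observe that the upper bound is a fixed (in $t$ and $r$) linear combination of the $r-1$ quantities $\ex(n,K_i,F)$, hence is $O(\max_s\ex(n,K_s,F))$, and that each lower-bound term $\ex(n-(r-s),K_s,F)$ has the same order as $\ex(n,K_s,F)$. The latter is a routine smoothness fact: deleting a vertex of minimum $K_s$-degree in an extremal graph shows $\ex(n-1,K_s,F)\ge(1-s/n)\ex(n,K_s,F)$, so a constant additive shift of the vertex count changes the value by at most a constant factor. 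I expect the only genuine obstacle to be the bookkeeping around the star exclusion in Lemma~\ref{lb}; everything else is either a direct appeal to Theorem~\ref{rpartite} or a standard averaging estimate.
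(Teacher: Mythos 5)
Your proof is correct and follows essentially the same route as the paper: the upper bound by passing to a densest $r$-partite subhypergraph (a $\frac{r!}{r^r}$ fraction of the edges) and applying Theorem~\ref{rpartite}, and the lower bound by combining~\eqref{eqindF} at uniformity $s$ with $r-s$ iterations of Lemma~\ref{lb}. Your explicit handling of the star case $F=K_{1,t-1}$ (where Lemma~\ref{lb} does not apply, patched via Theorem~\ref{star} and the $O(n)$ bound on $\ex(n,K_s,K_{1,t-1})$) is a detail the paper's one-line proof glosses over, and is a welcome addition.
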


\begin{proof}
The lower bound follows from Lemma~\ref{lb} and~\eqref{eqindF}. For the upper bound, we use the fact that any $r$-uniform hypergraph $\cH$ has an $r$-partite subhypergraph with at least $\frac{r!}{r^r}e(\cH)$ edges. Apply Theorem~\ref{rpartite} to any such subhypergraph.
\end{proof}

{\em Proof of Theorem~\ref{maincycle}}. The lower bound comes from Lemma~\ref{lb}.
For the upper bound, we proceed by induction on $r$.
First we show that if $\cH$ is $r$-partite with no induced Berge $F\in \GT$ then
\begin{equation}\label{eq3}
  |\cH| \leq  (t-2)^{r-2}\frac{r!}{2} \ex(n,F).
  \end{equation}
The base case $r=2$ is trivial, so let $r \geq 3$.
Let $\cA$ and $\cB$ be subhypergraphs of $\cH$ obtained from Theorem~\ref{core} with $\alpha = t-1$. Again we have
\begin{equation}\label{eqcycle}
   |\cH| \leq |\cA| + (t-2)\sum_{s=1}^r|B[\overline{s}]| \leq |\cA| + (t-2)(r) \ex_{r-1}(n,\IB F).
   \end{equation}
Observe that $\cA$ is empty. Indeed, if $\cA$ contains at least one edge, then the 2-shadow $\partial_2 \cA$ contains a $K_r$. So
Claim~\ref{cl:34} and Lemma~\ref{le:35} imply that $\partial_2 \cA$ contains a copy of $F$. Then we apply Lemma~\ref{coreF} to find an induced Berge $F$, a contradiction. Hence $|\cA| = 0$.
Applying induction hypothesis,~\eqref{eqcycle} yields~\eqref{eq3}.  

Finally, if $\cH$ is not $r$-partite, then we apply the previous proof to an $r$-partite subgraph $\cH'$ of $\cH$ with at least $\frac{r!}{r^r}|\cH|$ edges to obtain $|\cH| \leq \frac{1}{2} r^{r}(t-2)^{r-2} \ex(n, F)$. \qed

{\em Proof of Theorem~\ref{star}.}
For the lower bound, let each component of $\cH$ be  a clique such that there are as many cliques of size $r+t-3$ as possible. If $n = a(r+t-3) + b$ where $0 \leq b < r+t-3$, then $|\cH| = a{r+t-3 \choose r} + {b \choose r}.$
Suppose $\cH$ contains an induced Berge $K_{1,t-1}$. Then its base vertices, say $\{v_1, \ldots, v_t\}$ must be contained in a single component of $\cH$. But each edge in a component contains at least 3 base vertices, a contradiction.

For the upper bound, let $\cH$ be an $n$-vertex, $r$-uniform hypergraph with no induced Berge $K_{1,t-1}$. We say that a set system $\{f_1, \ldots, f_s\}$ is {\em strongly representable} if for every $f_i \in \cF$, there exists a $v_i \in f_i$ such that $v_i \notin f_j$ for all $j \neq i$. F\"uredi and Tuza~\cite{FT} proved that if a set system $\cF$ with $|f| \leq r$ for all $f \in \cF$ does not contain a strongly representable subfamily of size $s$ then $|\cF| \leq {r + s-1 \choose r}$. For any vertex $v \in V(\cH)$, let $E_v:=\{e \setminus \{v\}: v \in e \in \cH\}$. The $(r-1)$-uniform set system $E_v$ cannot contain a strongly representable subfamily of size $t-1$, otherwise the corresponding edges in $\cH$ and their representative vertices would yield an induced Berge $K_{1,t-1}$ in $\cH$ with vertex $v$ as the center vertex. Therefore $\deg(v) \leq {(r-1)+(t-2) \choose r-1}$ so $|\cH| \leq \frac{n}{r} {r+t-3 \choose r-1}$.
\qed

\newpage

\end{document}